\def\NZQ{\mathbb}               
\def\ZZ{{\NZQ Z}}
\def\frk{\mathfrak}               
\def\mm{{\frk m}}
\def\Phi{{\frk N}}
\def\eb{{\bold e}}
\def\opn#1#2{\def#1{\operatorname{#2}}} 
\opn\chara{char} 
\opn\length{\ell} 
\opn\pd{pd} 
\opn\rk{rk}
\opn\projdim{proj\,dim} 
\opn\injdim{inj\,dim} 
\opn\rank{rank}
\opn\depth{depth} 
\opn\grade{grade} 
\opn\height{height}
\opn\embdim{emb\,dim} 
\opn\codim{codim}
\opn\Tr{Tr} 
\opn\bigrank{big\,rank}
\opn\superheight{superheight}
\opn\lcm{lcm}
\opn\trdeg{tr\,deg}
\opn\reg{reg} 
\opn\lreg{lreg} 
\opn\ini{in} 
\opn\lpd{lpd}
\opn\size{size}
\opn\mult{mult}
\opn\dist{dist}
\opn\cone{cone}
\opn\lex{lex}
\opn\rev{rev}
\opn\div{div} \opn\Div{Div} \opn\cl{cl} \opn\Cl{Cl}
\opn\Spec{Spec} \opn\Supp{Supp} \opn\supp{supp} \opn\Sing{Sing}
\opn\Ass{Ass} \opn\Min{Min}
\opn\Ann{Ann} \opn\Rad{Rad} \opn\Soc{Soc}
\opn\Syz{Syz} \opn\Im{Im} \opn\Ker{Ker} \opn\Coker{Coker}
\opn\Am{Am} \opn\Hom{Hom} \opn\Tor{Tor} \opn\Ext{Ext}
\opn\End{End} \opn\Aut{Aut} \opn\id{id} \opn\ini{in}
\opn\nat{nat}
\opn\pff{pf}
\opn\Pf{Pf} \opn\GL{GL} \opn\SL{SL} \opn\mod{mod} \opn\ord{ord}
\opn\Gin{Gin}
\opn\Hilb{Hilb}\opn\adeg{adeg}\opn\std{std}\opn\ip{infpt}
\opn\Pol{Pol}
\opn\sat{sat}
\opn\Var{Var}
\opn\Gen{Gen}
\opn\aff{aff} \opn\con{conv} \opn\relint{relint} \opn\st{st}
\opn\lk{lk} \opn\cn{cn} \opn\core{core} \opn\vol{vol}
\opn\link{link} \opn\star{star}
\opn\gr{gr}
\def\pot#1#2{#1[\kern-0.28ex[#2]\kern-0.28ex]}
\opn\dirlim{\underrightarrow{\lim}}
\opn\inivlim{\underleftarrow{\lim}}
\let\to=\rightarrow
\def\Implies{\ifmmode\Longrightarrow \else
        \unskip${}\Longrightarrow{}$\ignorespaces\fi}
\def\implies{\ifmmode\Rightarrow \else
        \unskip${}\Rightarrow{}$\ignorespaces\fi}
\def\iff{\ifmmode\Longleftrightarrow \else
        \unskip${}\Longleftrightarrow{}$\ignorespaces\fi}
\newtheorem{Theorem}{Theorem}[section]
\newtheorem{Lemma}[Theorem]{Lemma}
\newtheorem{Proposition}[Theorem]{Proposition}
\newtheorem{Example}[Theorem]{Example}
\newtheorem{Observation}[Theorem]{Observation}
\let\epsilon\varepsilon
\let\phi=\varphi
\let\kappa=\varkappa
\def\qed{\ifhmode\textqed\fi
      \ifmmode\ifinner\quad\qedsymbol\else\dispqed\fi\fi}
\def\textqed{\unskip\nobreak\penalty50
       \hskip2em\hbox{}\nobreak\hfil\qedsymbol
       \parfillskip=0pt \finalhyphendemerits=0}
\def\dispqed{\rlap{\qquad\qedsymbol}}
\opn\dis{dis}
\opn\height{height}
\opn\dist{dist}
\def\pnt{{\raise0.5mm\hbox{\large\bf.}}}
\opn\Lex{Lex}
\begin{document}
\title{Nonincreasing depth functions of monomial ideals}
\author{Kazunori Matsuda, Tao Suzuki and Akiyoshi Tsuchiya}
\address{Kazunori Matsuda,
Department of Pure and Applied Mathematics,
Graduate School of Information Science and Technology,
Osaka University, Suita, Osaka 565-0871, Japan}
\email{kaz-matsuda@ist.osaka-u.ac.jp}
\address{Tao Suzuki,
Department of Pure and Applied Mathematics,
Graduate School of Information Science and Technology,
Osaka University, Suita, Osaka 565-0871, Japan}
\email{t-suzuki@ist.osaka-u.ac.jp}
\address{Akiyoshi Tsuchiya,
Department of Pure and Applied Mathematics,
Graduate School of Information Science and Technology,
Osaka University, Suita, Osaka 565-0871, Japan}
\email{a-tsuchiya@cr.math.sci.osaka-u.ac.jp}
\subjclass[2010]{13A02, 13A15, 13C15}
\keywords{depth function, limit depth, depth stability number}
\begin{abstract}
Given a nonincreasing  
function $f : \ZZ_{\geq 0} \setminus \{ 0 \} \to \ZZ_{\geq 0}$ such that
(i) $f(k) - f(k+1) \leq 1$ for all $k \geq 1$ and
(ii) if $a = f(1)$ and $b = \lim_{k \to \infty} f(k)$, then
$|f^{-1}(a)| \leq |f^{-1}(a-1)| \leq \cdots \leq |f^{-1}(b+1)|$,
a system of generators of a monomial ideal $I \subset K[x_1, \ldots, x_n]$ 
for which $\depth S/I^k = f(k)$ for all $k \geq 1$ is explicitly described.  
Furthermore, we give a characterization of triplets of integers $(n,d,r)$ 
with $n > 0$, $d \geq 0$ and $r > 0$ with the properties that 
there exists a monomial ideal $I \subset S = K[x_1, \ldots, x_n]$
for which $\lim_{k \to \infty} \depth S/I^k = d$ and
${\rm dstab}(I) = r$, where ${\rm dstab}(I)$ is the smallest integer 
$k_0 \geq 1$ 
with $\depth S/I^{k_0} = \depth S/I^{k_0+1} = \depth S/I^{k_0+2} = \cdots$.
\end{abstract}
\maketitle
\section*{Introduction}
The study on depth of powers of ideals, 
which originated in \cite{HH},
has been achieved by many authors in the last decade.
Let $S = K[x_1, \ldots, x_n]$ denote the polynomial ring
in $n$ variables over a field $K$ and $I \subset S$ a homogeneous ideal.
The numerical function $f : \ZZ_{\geq 0} \setminus \{ 0 \} \to \ZZ_{\geq 0}$
defined by $f(k) = \depth S/I^k$ is called the {\em depth function} of $I$.
It is known \cite{B} that $f(k) = \depth S/I^k$ is constant for $k \gg 0$.
We call $\lim_{k \to \infty} f(k)$ the {\em limit depth} of $I$.
The smallest integer $k_0 \geq 1$ for which
$f(k_0) = f(k_0 + 1) = f(k_0 + 2) = \cdots$ 
is said to be the {\em depth stability number} of $I$ 
and is denoted by ${\rm dstab}(I)$.

An exciting conjecture (\cite[p.~549]{HH})  
is that {\em any} convergent function 
$f : \ZZ_{\geq 0} \setminus \{ 0 \} \to \ZZ_{\geq 0}$
can be the depth function of a homogeneous ideal.
In \cite[Theorem 4.1]{HH}, 
given a bounded nondecreasing function
$f : \ZZ_{\geq 0} \setminus \{ 0 \} \to \ZZ_{\geq 0}$,
a system of generators of a monomial ideal $I$ for which 
$\depth S/I^k = f(k)$ for all $k \geq 1$
is explicitly described.
In \cite[Theorem 4.9]{HTT}, it is shown that,
given a nonincreasing function
$f : \ZZ_{\geq 0} \setminus \{ 0 \} \to \ZZ_{\geq 0}$, 
there exists a monomial ideal $Q$ for which $\depth S/Q^k = f(k)$ 
for all $k \geq 1$.  Unlike the proof of \cite[Theorem 4.1]{HH},
since the proof of \cite[Theorem 4.9]{HTT} relies on induction on 
$\lim_{k \to \infty} f(k)$, no explicit description of a system of generators 
of a monomial ideal $Q$ is provided.

Our original motivation to organize this paper 
was to find an explicit description of 
a system of generators of a monomial ideal $Q$ of \cite[Theorem 4.9]{HTT}.
However, there seems to be a gap in the proof of \cite[Theorem 4.9]{HTT}
and it is unclear whether \cite[Theorem 4.9]{HTT} is true.
In fact, the inductive argument done in the proof of \cite[Theorem 4.9]{HTT}
cannot be valid for the nonincreasing function 
$f : \ZZ_{\geq 0} \setminus \{ 0 \} \to \ZZ_{\geq 0}$ 
with $f(1) = f(2) = 2$ and $f(3) = f(4) = \cdots = 0$. 
In the present paper, given a nonincreasing  
function $f : \ZZ_{\geq 0} \setminus \{ 0 \} \to \ZZ_{\geq 0}$ such that
\begin{itemize}
\item
$f(k) - f(k+1) \leq 1$ for all $k \geq 1$;
\item
if $a = f(1)$ and $b = \lim_{k \to \infty} f(k)$, then
\[
|f^{-1}(a)| \leq |f^{-1}(a-1)| \leq \cdots \leq |f^{-1}(b+1)|,
\]
\end{itemize}
a system of generators of a monomial ideal $I$ for which 
$\depth S/I^k = f(k)$ for all $k \geq 1$ is explicitly described
(Theorem \ref{main}).  
Furthermore, we give a characterization of triplets of integers $(n,d,r)$ 
with $n > 0$, $d \geq 0$ and $r > 0$ with the properties that 
there exists a monomial ideal $I \subset S = K[x_1, \ldots, x_n]$
for which $\lim_{k \to \infty} \depth S/I^k = d$ and
${\rm dstab}(I) = r$ (Theorem \ref{ndr}).

\section{Nonincreasing depth functions}
 Let $K$ be a field and $S = K[x_1,\ldots,x_n]$ the polynomial ring in $n$ variables over $K$ with
each $\deg x_i = 1$.

In this section, we show the following theorem.

\begin{Theorem}
	\label{main}
	Given a nonincreasing  
	function $f : \ZZ_{\geq 0} \setminus \{ 0 \} \to \ZZ_{\geq 0}$ such that
	\begin{itemize}
		\item
		$f(k) - f(k+1) \leq 1$ for all $k \geq 1$;
		\item
		if $a = f(1)$ and $b = \lim_{k \to \infty} f(k)$, then
		\[
		|f^{-1}(a)| \leq |f^{-1}(a-1)| \leq \cdots \leq |f^{-1}(b+1)|,
		\]
	\end{itemize}
	there is a monomial ideal $I$ for which $\depth S/I^k = f(k)$ for all $k \geq 1$.
\end{Theorem}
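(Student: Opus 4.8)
The plan is to prove this constructively by explicitly exhibiting generators of a monomial ideal $I$ whose depth function realizes $f$. The key structural observation is that the two hypotheses on $f$ encode exactly the data of a sequence of "drops": since $f$ is nonincreasing with $f(k)-f(k+1)\le 1$, the function decreases from $a=f(1)$ down to $b=\lim_k f(k)$ in unit steps, and the quantities $|f^{-1}(j)|$ record how many consecutive values of $k$ the function spends at level $j$ before dropping. The monotonicity condition $|f^{-1}(a)|\le|f^{-1}(a-1)|\le\cdots\le|f^{-1}(b+1)|$ says that the ideal lingers longer at lower levels than at higher ones as $k$ grows. The strategy is to build $I$ as a sum (in disjoint sets of variables) of simpler "building-block" ideals, one for each unit drop of $f$, so that the depth function of the power $I^k$ decomposes as a minimum or sum governed by a depth-intersection formula.

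First I would set up the building blocks. For each level $j$ with $b+1\le j\le a$, the function stays at $j$ for $|f^{-1}(j)|$ values of $k$ and then drops by one. The natural building block is a monomial ideal $J_j$, in its own fresh block of variables, whose depth function equals a shifted truncation: $\depth S/J_j^k$ should drop from some constant to a lower constant precisely at the step $k=|f^{-1}(j)|$. Ideals with a single prescribed depth-drop at a prescribed power are exactly the kind constructed in the nondecreasing case of \cite[Theorem 4.1]{HH} (after mirroring), or can be assembled from ideals of the form $(x_1,\dots,x_s)^t$ type constructions whose depths of powers are computable. The $|f^{-1}(\cdot)|$ monotonicity is what permits these drops to be arranged so the composite schedule is consistent: a later drop must occur at a power that is at least as large as the preceding one.

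Next I would combine the blocks. Writing $I$ as the sum $I=J_{a}+J_{a-1}+\cdots+J_{b+1}$ over disjoint variable sets (equivalently, working in the tensor product of the polynomial rings), I would invoke the standard formula for depth of a sum of ideals in disjoint variables: $\depth S/(J+J')^k$ is controlled by a convolution/minimum over the depth functions of $J$ and $J'$ and of the ambient quotient, via the Künneth-type or Tor-splitting decomposition. The clean version here is that for ideals in separate variables one has $\depth\, S/(J + J')^k = \min$ over splittings $k = k_1 + k_2$ of the combined depths, up to the additive contribution of the untouched variables. Tracking this minimum across all blocks, the dominant (smallest) contribution at power $k$ is exactly the block whose drop has not yet occurred, yielding $\depth S/I^k = f(k)$.

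The main obstacle I expect is the combinatorial bookkeeping in the combination step: verifying that the minimum-over-splittings formula, when applied to all the blocks simultaneously, produces precisely the prescribed step pattern $f$ and not some coarser lower envelope. In particular one must check that the base depth values of the blocks (the contributions from variables where no drop is active) add up correctly so that the value at each power is $f(k)$ exactly, with no accidental extra drop or premature plateau; this is where the ordering hypothesis $|f^{-1}(a)|\le\cdots\le|f^{-1}(b+1)|$ must be used decisively, to guarantee the drops fire in the right temporal order under the convolution. A secondary technical point is confirming that each building block genuinely has a single unit drop at the designated power and limit depth, which reduces to the explicit computations available from the constructions of \cite{HH}, so I would treat that as citable rather than reproving it.
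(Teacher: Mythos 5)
Your overall architecture is exactly the paper's: one three-variable block per unit drop of $f$, each block's depth function constant equal to $1$ until a prescribed power and $0$ thereafter, all blocks summed in disjoint variables together with $b$ free variables to set the limit depth, and the composite depth function computed via the minimum-over-splittings formula for powers of sums of ideals in disjoint variables (Lemma \ref{ideal}, from \cite{HTT} and \cite{Nguyen}), with the hypothesis $|f^{-1}(a)|\le\cdots\le|f^{-1}(b+1)|$ ensuring the minimum realizes the drops in the right order. So the combination step you worry about is fine and is handled in the paper exactly as you sketch, with one small caveat: the formula from \cite{HTT}/\cite{Nguyen} computes $\depth (I+J)^{t-1}/(I+J)^t$, not $\depth S/(I+J)^t$, and you need the observation (the paper's Lemma \ref{noninc}) that for a \emph{nonincreasing} depth function these two quantities coincide, since $\depth I^{k-1}/I^k=\min\{\depth S/I^{k-1}+1,\ \depth S/I^k\}$.

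The genuine gap is in the building block, which you declare ``citable rather than reproving.'' It is not citable. The explicit ideals of \cite[Theorem 4.1]{HH} have \emph{nondecreasing} depth functions: $\depth$ jumps \emph{up} from $0$ to $1$ at power $r$. There is no ``mirroring'' operation on monomial ideals that reverses a depth function, and constructions of the form $(x_1,\dots,x_s)^t$ have constant depth functions of powers. What you need --- a monomial ideal in a fixed small number of variables with $\depth S/I^k=1$ for $k\le t-1$ and $\depth S/I^k=0$ for $k\ge t$, for arbitrary prescribed $t$ --- is precisely the main technical content of the paper (Proposition \ref{01}): the ideal $I=(x^t,\,xy^{t-2}z,\,y^{t-1}z)$ in $K[x,y,z]$ works, but proving it requires (i) exhibiting for $n\ge t$ an explicit monomial in $(I^n:\mm)\setminus I^n$ (namely $x^{tn-t^2+t}y^{t^2-2t}z^{t-1}$) to force depth $0$, and (ii) showing $\pd I^n=1$ for $n\le t-1$ by a Buchberger-graph analysis of the generators $x^{ta+b}y^{(t-2)b+(t-1)c}z^{b+c}$ of $I^n$ to force depth $1$. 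Neither step follows from anything in \cite{HH}. Until you supply such a block (or an equivalent single-drop ideal with a proof), the proposal does not close.
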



At first, we prepare some lemmas to prove Theorem \ref{main}. 
\begin{Lemma}$($\cite[Corollary 5.11]{Nguyen}$)$
        \label{betti}
Let $I$ be a monomial ideal in $S$. 
Then for any integer $k \geq 1$, we have
$$ \depth I^{k-1}/I^k =  \min \{\depth I^{k-1} , \depth I^k - 1\}.$$
\end{Lemma}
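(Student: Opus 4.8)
The plan is to read both inequalities off the Depth Lemma applied to the short exact sequence
$$0 \to I^k \to I^{k-1} \to I^{k-1}/I^k \to 0,$$
in which the first map is the natural inclusion. Writing $p = \depth I^k$, $q = \depth I^{k-1}$ and $r = \depth I^{k-1}/I^k$, the Depth Lemma supplies the three inequalities $q \geq \min\{p,r\}$, $p \geq \min\{q,r+1\}$ and $r \geq \min\{p-1,q\}$. The last of these already gives the bound $r \geq \min\{p-1,q\}$, so only the reverse inequality $r \leq \min\{p-1,q\}$ needs to be established.

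First I would dispose of the case $p \neq q$ purely formally. If $p < q$, then the second inequality cannot be satisfied through $p \geq q$, so it forces $p \geq r+1$, i.e.\ $r \leq p-1 = \min\{p-1,q\}$. If $p > q$, then the first inequality cannot hold through $q \geq p$, so it forces $r \leq q = \min\{p-1,q\}$. In either situation equality holds. Thus the entire content of the lemma is concentrated in the single case $p = q$, where the Depth Lemma yields no upper bound on $r$ and one is left to prove that $r = p-1$, that is, $\depth(I^{k-1}/I^k) < \depth I^{k-1}$.

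To attack the case $p = q$ I would pass to local cohomology. The long exact sequence attached to the displayed short exact sequence gives, around cohomological degree $p-1$,
$$0 \to \Coh{p-1}{I^{k-1}/I^k} \to \Coh{p}{I^k} \overset{\phi}{\to} \Coh{p}{I^{k-1}},$$
where the leftmost term vanishes is replaced by $0$ because $\Coh{p-1}{I^{k-1}} = 0$ by $\depth I^{k-1} = p$. Hence $\Coh{p-1}{I^{k-1}/I^k} = \Ker\phi$, and the lemma reduces to showing that the map $\phi$ on the lowest nonvanishing local cohomology, induced by the inclusion $I^k = I\cdot I^{k-1} \hookrightarrow I^{k-1}$, fails to be injective whenever $\depth I^k = \depth I^{k-1}$.

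This last point is the main obstacle, and it is genuinely the heart of the matter: the analogous statement is false for an arbitrary inclusion of modules of equal depth (for instance a split injection $S \hookrightarrow S^2$, whose cokernel $S$ again has full depth), so the argument must exploit the specific nature of the maps $I^k \hookrightarrow I^{k-1}$ rather than abstract homological algebra. Since $\phi$ is a graded homomorphism, I would try to prove $\Ker\phi \neq 0$ by a degreewise comparison, namely by exhibiting a single degree in which the $K$-dimension of $\Coh{p}{I^k}$ strictly exceeds that of $\Coh{p}{I^{k-1}}$; here one would use the relation between the graded local cohomology of $I^j$ and the Hilbert function of $S/I^j$ coming from $0 \to I^j \to S \to S/I^j \to 0$, together with the combinatorial description of local cohomology available for monomial ideals. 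Producing such a strict inequality in the equal-depth case is exactly where the real work, and the monomial hypothesis, must enter.
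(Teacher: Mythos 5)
Your formal reductions are correct: the Depth Lemma applied to $0 \to I^k \to I^{k-1} \to I^{k-1}/I^k \to 0$ does give equality outright whenever $\depth I^k \neq \depth I^{k-1}$, and your local cohomology computation correctly isolates the remaining case as the assertion that the induced map $\phi \colon \Coh{p}{I^k} \to \Coh{p}{I^{k-1}}$ has nonzero kernel when $p = \depth I^k = \depth I^{k-1}$. You are also right that no abstract argument can finish from here (your split-injection example is a valid obstruction). But that last case is not a technicality --- it is the entire content of the lemma, and your proposal stops exactly there. The concluding paragraph offers only a plan (find a single graded degree in which $\dim_K \Coh{p}{I^k}$ strictly exceeds $\dim_K \Coh{p}{I^{k-1}}$) without executing it, and the plan itself is not obviously viable: non-injectivity of a graded map need not be detectable by a degreewise dimension count unless one already controls the graded pieces of $\Coh{p}{I^j}$ for powers of monomial ideals, which is precisely the information you would need to establish. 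So this is a genuine gap, not a routine verification left to the reader.

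For what it is worth, the paper does not prove this statement either: it is quoted verbatim as Corollary 5.11 of Nguyen--Vu, where it is obtained from their structural results on depth and $\Tor$ of powers of (sums of) monomial ideals rather than from the short exact sequence alone. Your write-up correctly diagnoses where the monomial hypothesis must enter, which is a useful observation, but as a proof it establishes only the two easy cases and the inequality $\depth I^{k-1}/I^k \geq \min\{\depth I^k - 1, \depth I^{k-1}\}$; the reverse inequality in the equal-depth case remains unproved. Either supply the missing argument or present the statement as a citation, as the paper does.
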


\begin{Lemma}
	Let $I$ be a monomial ideal in $S$.
Then the following arguments are equivalent:
\begin{itemize}
	\label{noninc}
	\item[(a)] $\depth S/I^k$ is nonincreasing. 
	\item[(b)] $\depth I^{k-1}/I^{k}$ is nonincreasing.
\end{itemize}
Moreover, when this is the case, $\depth S/I^k=\depth I^{k-1}/I^{k}$ for any $k \geq 1$. 
\end{Lemma}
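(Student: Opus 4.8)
The plan is to rewrite the exact formula of Lemma~\ref{betti} entirely in terms of the integers $\depth S/I^{k}$, after which the whole equivalence reduces to a short numerical argument. First I would record the elementary relation
\[
\depth I^{k} = \depth S/I^{k} + 1 \qquad (k \geq 1),
\]
valid for every proper nonzero ideal $I$. Applying the depth lemma to the short exact sequence $0 \to I^{k} \to S \to S/I^{k} \to 0$ gives both $\depth I^{k} \geq \min\{\depth S,\ \depth S/I^{k}+1\}$ and $\depth S/I^{k} \geq \min\{\depth I^{k}-1,\ \depth S\}$; since $\depth S = n$ while $\depth S/I^{k} \leq \dim S/I^{k} \leq n-1$ and $\depth I^{k} \leq n$, both minima are attained by their finite middle entries and the two inequalities collapse to the stated equality. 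Substituting this into Lemma~\ref{betti} yields, for $k \geq 2$,
\[
\depth I^{k-1}/I^{k} = \min\{\depth I^{k-1},\ \depth I^{k}-1\} = \min\{\depth S/I^{k-1}+1,\ \depth S/I^{k}\},
\]
while for $k=1$ the module $I^{0}/I^{1}$ is just $S/I$, so $\depth I^{0}/I^{1} = \depth S/I^{1}$.

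To lighten notation set $g(k) = \depth S/I^{k}$ and $h(k) = \depth I^{k-1}/I^{k}$, so that $h(1) = g(1)$ and $h(k) = \min\{g(k-1)+1,\ g(k)\}$ for $k \geq 2$. For the direction (a) $\Rightarrow$ (b) together with the final ``moreover'' assertion, I would observe that if $g$ is nonincreasing then $g(k-1)+1 > g(k)$, so the minimum defining $h(k)$ is always attained by its second entry; hence $h(k) = g(k)$ for every $k$, and in particular $h$ is nonincreasing.

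For the converse (b) $\Rightarrow$ (a), I would prove by induction on $k$ the combined statement that $g(1) \geq g(2) \geq \cdots \geq g(k)$ and $h(j) = g(j)$ for all $j \leq k$; the base case $k=1$ is just $h(1)=g(1)$. In the inductive step, the hypothesis that $h$ is nonincreasing gives $h(k+1) \leq h(k) = g(k) < g(k)+1$; since one of the two entries of $\min\{g(k)+1,\ g(k+1)\}$ equals $g(k)+1$, this strict inequality forces the minimum to be the other entry, so $g(k+1) \leq g(k)$ and $h(k+1) = g(k+1)$. This closes the induction, shows that $g$ is nonincreasing, and reproves $g=h$.

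I do not anticipate a substantial obstacle. The whole argument rests on the single numerical fact that, in $\min\{g(k-1)+1,\ g(k)\}$, the value is $\leq g(k-1)$ precisely when $g(k) \leq g(k-1)$, in which case the minimum equals $g(k)$. The only points requiring genuine care are the boundary index $k=1$ (where $I^{0}=S$ and the formula must be read off separately) and the standing assumption that $I$ is a proper nonzero ideal, which is exactly what makes $\depth I^{k}=\depth S/I^{k}+1$ hold and keeps the depth of the zero module out of the discussion.
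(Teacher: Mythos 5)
Your proposal is correct and follows essentially the same route as the paper: both rewrite Lemma~\ref{betti} as $\depth I^{k-1}/I^{k}=\min\{\depth S/I^{k-1}+1,\ \depth S/I^{k}\}$ via the identity $\depth I^{k}=\depth S/I^{k}+1$, and both directions then reduce to the same observation about which entry attains the minimum, with the converse handled by the same induction starting from $\depth I^{0}/I^{1}=\depth S/I$. Your write-up is merely more explicit than the paper's about the depth-lemma justification and the boundary case $k=1$.
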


\begin{proof}
	Set $f(k)=\depth S/I^k$ and $g(k)=\depth I^{k-1}/I^{k}$.
Since we obtain $\depth I^k = \depth S/I^k +1$ for any $k \geq 1$,
 by Lemma  \ref{betti}, it is obvious that
$$g(k) = \min \{f(k-1) +1, f(k)\}, k=1,2,\ldots.$$
Hence we know that if $f(k)$ is nonincreasing, then we have $g(k) = f(k)$.

On the other hand, we assume that $g(k)$ is nonincreasing.
If $f(t)=g(t)$ for an integer $t \geq 1$,
then we have $f(t+1)=g(t+1)$.
Since $f(1)=g(1)$, it follows that for any integer $k \geq 1$, $f(k)=g(k)$.
\end{proof}

\begin{Lemma}
	\label{ideal}
	Set $A=K[x_1,\ldots,x_{n'}]$ and $B=K[x_{n'+1},\ldots,x_n]$, and we let $I$ , $J$ are monomial ideals in $A$ and $B$.
Then for any integer $t \geq 1$, we have
$$ \depth (I + J)^{t-1}/(I + J)^t = \min_{\stackrel{i+j = t+1}{i,j \geq 1}} \{\depth I^{i-1}/I^i + \depth J^{j-1}/J^j\}.$$
\end{Lemma}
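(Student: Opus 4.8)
The plan is to reduce the computation to the classical behaviour of depth under tensor products over disjoint sets of variables. Since $I \subset A$ and $J \subset B$ involve disjoint variables we have $S = A \otimes_K B$, and for finitely generated graded modules $M$ over $A$ and $N$ over $B$ there is a K\"unneth isomorphism $\Tor_i^S(M \otimes_K N, K) \cong \bigoplus_{p+q=i}\Tor_p^A(M,K)\otimes_K\Tor_q^B(N,K)$. Tensoring minimal graded free resolutions (an $A$-free tensored over $K$ with a $B$-free is $S$-free, and minimality is preserved) gives $\pd_S(M\otimes_K N) = \pd_A M + \pd_B N$, so the Auslander--Buchsbaum formula yields
$$\depth_S(M\otimes_K N) = \depth_A M + \depth_B N.$$
Combined with the fact that the depth of a finite direct sum of nonzero modules is the minimum of their depths, the lemma will follow once the quotient is identified as a suitable direct sum of tensor products.

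The heart of the argument is therefore the $S$-module isomorphism
$$(I+J)^{t-1}/(I+J)^t \;\cong\; \bigoplus_{\substack{i+j=t+1\\ i,j\geq 1}} \big(I^{i-1}/I^i\big)\otimes_K\big(J^{j-1}/J^j\big).$$
To establish it I would first record the monomial description of the powers. Every monomial $w \in S$ factors uniquely as $w = uv$ with $u$ a monomial in $A$ and $v$ a monomial in $B$; writing $p(u) = \max\{a : u \in I^a\}$ and $q(v) = \max\{b : v \in J^b\}$, and using $(I+J)^t = \sum_{i+j=t} I^iJ^j$ together with the disjointness of the variables, one checks that $w \in (I+J)^t$ if and only if $p(u)+q(v) \geq t$. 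Consequently $(I+J)^{t-1}/(I+J)^t$ has a $K$-basis given by the residues of the monomials $w=uv$ with $p(u)+q(v) = t-1$.

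Next I would define the comparison map on each summand by $\bar u \otimes \bar v \mapsto \overline{uv}$, where $u \in I^{i-1}$, $v \in J^{j-1}$ with $i+j = t+1$. Well-definedness is where the order functions enter: if $u \in I^i$ then $p(u)+q(v) \geq i+(j-1) = t$, so $uv \in (I+J)^t$ and its residue vanishes, and symmetrically for $v \in J^j$; thus the map descends to $(I^{i-1}/I^i)\otimes_K(J^{j-1}/J^j)$. The induced map on the direct sum is a $K$-linear isomorphism, since it carries the monomial basis $\bar u\otimes\bar v$ (with $p(u)=i-1$, $q(v)=j-1$) bijectively onto the monomial basis of the target described above. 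The point that requires genuine checking is $S$-linearity: it suffices to verify the action of each variable, and here the key observation is that multiplying $uv$ by a variable of $A$ either leaves $p(u)$ unchanged, so the product stays in the same summand, or raises it, in which case $p(u)+q(v)$ reaches $t$ and the residue becomes zero, exactly matching the behaviour in $I^{i-1}/I^i$; the variables of $B$ are handled symmetrically.

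I expect the verification of this $S$-module isomorphism---specifically the compatibility of the bigrading by $(p(u),q(v))$ with the $S$-action---to be the main, though essentially bookkeeping, obstacle; the disjointness of the two variable sets is what makes it work. Once it is in place, applying the tensor-product depth formula to each summand and taking the minimum gives
$$\depth (I+J)^{t-1}/(I+J)^t = \min_{\substack{i+j=t+1\\ i,j\geq 1}}\big\{\depth I^{i-1}/I^i + \depth J^{j-1}/J^j\big\},$$
with the convention that vanishing summands contribute $+\infty$ and hence do not affect the minimum.
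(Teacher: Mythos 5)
Your argument is correct and takes essentially the same route as the paper: the paper's proof consists of citing the direct sum decomposition of $(I+J)^{t-1}/(I+J)^t$ into the tensor products $(I^{i-1}/I^i)\otimes_K(J^{j-1}/J^j)$ (\cite[Theorem 3.3 (i)]{HTT}) together with the additivity of depth under tensor products over $K$ (\cite[Theorem 1.1]{Nguyen}), and these are exactly the two facts you establish from scratch. Your verification of the decomposition via the order functions $p,q$ and of $\depth_S(M\otimes_K N)=\depth_A M+\depth_B N$ by tensoring minimal free resolutions and applying Auslander--Buchsbaum is sound.
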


\begin{proof}
	It follows by combining \cite[Theorem3.3 (i)]{HTT} and \cite[Theorem1.1]{Nguyen}.
\end{proof}

The following proposition is important in this paper.
\begin{Proposition}
	\label{01}
Let $t \geq 2$ be an integer and we set a monomial ideal $I = (x^t,xy^{t-2}z,y^{t-1}z)$ in $B = K[x,y,z]$. Then
\begin{displaymath}
\depth B/I^n =\left\{
\begin{aligned}
&1,& \ \textnormal{if}& \ n \leq t-1,\\
&0,& \ \textnormal{if}& \ n \geq t. 
\end{aligned}
\right.
\end{displaymath}
\end{Proposition}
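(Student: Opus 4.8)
The plan is to read off $\depth B/I^n$ from $\Ass(B/I^n)$, via the standard fact that $\depth B/I^n=0$ exactly when $\mm=(x,y,z)\in\Ass(B/I^n)$, i.e.\ when $B/I^n$ has nonzero socle. First I would dispose of the easy upper bound. Since $x^t\in I$, every prime over $I$ contains $x$, and then $y^{t-1}z\in I$ forces $y$ or $z$; hence the minimal primes of $I$ are $(x,y)$ and $(x,z)$, so $\sqrt{I^n}=\sqrt I=(x,y)\cap(x,z)$ and $\dim B/I^n=1$ for every $n\ge1$. Consequently $\depth B/I^n\le1$ throughout, and it remains only to decide when $\mm$ is an embedded prime, the claim being that this occurs precisely for $n\ge t$.

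The technical core is a membership test for $I^n$. A generator of $I^n$ has the form $(x^t)^a(xy^{t-2}z)^b(y^{t-1}z)^c$ with $a+b+c=n$, so $x^Py^Qz^R\in I^n$ iff there are integers $a,b,c\ge0$ with $a+b+c=n$, $ta+b\le P$, $(t-2)b+(t-1)c\le Q$ and $b+c\le R$. For fixed $a$ the admissible $b$ fill an interval, and eliminating $b$ I would reduce membership to the single statement that $x^Py^Qz^R\in I^n$ iff some integer $a$ satisfies $\max\{0,\,n-R,\,n-\lfloor Q/(t-2)\rfloor\}\le a\le\min\{n,\,\lfloor P/t\rfloor,\,P+Q-(t-1)n\}$ (omitting the term $n-\lfloor Q/(t-2)\rfloor$ when $t=2$). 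Every remaining step is an inequality manipulation between these two bounds.

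For $n\ge t$ I would exhibit the explicit socle generator $w=x^ny^{(t-2)n}z^{n-1}$. One checks $w\notin I^n$ because its lower bound $n-R=1$ exceeds its upper bound $P+Q-(t-1)n=0$, whereas $zw=(xy^{t-2}z)^n$, $yw$ and $xw$ are honest generators of $I^n$, realised by $(a,b,c)=(0,n,0)$, $(1,n-t,t-1)$ and $(1,n-t+1,t-2)$ respectively; the entry $b=n-t\ge0$ is exactly where $n\ge t$ enters. Hence $\mm\in\Ass(B/I^n)$ and $\depth B/I^n=0$.

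For $1\le n\le t-1$ I would instead show that $yw,zw\in I^n$ already forces $w\in I^n$, so no socle element exists and therefore $\depth B/I^n=1$. Writing $M=\min\{n,\lfloor P/t\rfloor,P+Q-(t-1)n\}$ for the upper bound attached to $w=x^Py^Qz^R$, the relation $zw\in I^n$ together with $w\notin I^n$ pins the violated lower bound to $n-R$, giving $n-R=M+1$, $M\ge0$ and $n-\lfloor Q/(t-2)\rfloor\le M$; then $yw\in I^n$ forces $M=P+Q-(t-1)n$ with $M<\min\{n,\lfloor P/t\rfloor\}$. Combining $\lfloor P/t\rfloor\ge M+1$ (so $P\ge t(M+1)$) with $Q\ge(t-2)(n-M)$ and $P=M+(t-1)n-Q$ yields $P\le(t-1)M+n$, whence $t(M+1)\le(t-1)M+n$, i.e.\ $M\le n-t<0$, contradicting $M\ge0$; for $t=2$ one uses $Q\ge0$ in place of the $Q$-bound and gets the same $M\le n-t$. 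This last inequality chase, and in particular the bookkeeping with the floor functions and the $t=2$ degeneration, is the step I expect to be most delicate.
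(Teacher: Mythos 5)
Your argument is correct, and for the harder half ($1\le n\le t-1$) it is genuinely different from the paper's. The paper also handles $n\ge t$ by exhibiting an explicit socle element (it uses $u=x^{tn-t^2+t}y^{t^2-2t}z^{t-1}$, which coincides with your $w=x^ny^{(t-2)n}z^{n-1}$ exactly when $n=t$), so that half is essentially the same idea with a different witness. For $n\le t-1$, however, the paper does not argue via $\Ass(B/I^n)$ at all: it computes the minimal free resolution of $I^n$ by ordering the generators $w(a,b,c)$ lexicographically, showing via the Buchberger graph that the syzygies between consecutive generators generate ${\rm syz}(I^n)$ and are linearly independent, hence $\pd I^n=1$ and $\depth B/I^n=1$ by Auslander--Buchsbaum. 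You instead observe that $\dim B/I^n=1$ forces $\depth B/I^n\le 1$, reduce everything to deciding whether $\mm\in\Ass(B/I^n)$, and settle that by an integer-programming style membership test (eliminating $b$ and $c$ to get lower and upper bounds on $a$, then running the inequality chase $t(M+1)\le P\le(t-1)M+n$ to reach $M\le n-t<0$). I checked the bookkeeping, including the $t=2$ degeneration where the $\lfloor Q/(t-2)\rfloor$ term is replaced by $Q\ge 0$, and it goes through; note also that you only need $yw,zw\in I^n$ to derive the contradiction, so not invoking $xw$ is harmless. Your route is more elementary and self-contained (no syzygy theory), at the cost of yielding less structural information: the paper's computation produces the full minimal free resolution of $I^n$, whereas yours only pins down the depth.
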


\begin{proof}
First of all, for each integer $n \geq t$, we show that $\depth B/I^n = 0$. For this purpose we find a monomial belonging to $(I^n \colon \mm) \setminus I^n$, where $\mm = (x,y,z)$. 
We claim that the monomial $u = x^{tn-t^2+t}y^{t^2-2t}z^{t-1}$ belongs to $(I^n \colon \mm) \setminus I^n$.
Indeed, each generator of $I^n$ forms
$$
w(a,b,c) := (x^t)^a (xy^{t-2}z)^b (y^{t-1}z)^c = x^{ta+b}y^{(t-2)b+(t-1)c}z^{b+c},
$$
where $a+b+c = n$ and $a,b,c \geq 0$.
Then we have 
$$w(n-t+1,1,t-2)|xu,$$
$$ w(n-t+1,0,t-1)|yu,$$
$$w(n-t,t,0)|zu.$$
Thus $u \in  (I^n \colon \mm)$. 
While the degree of $u$ is less than that of generators in $I^n$.
Hence we obtain $u \notin I^n$.

Next, we show that $\pd I^n = 1$ for all $1 \leq n \leq t-1$. 
In order to prove this, we use the theory of \textit{Buchberger graphs}.
Let $m_1, \ldots, m_s$ be the generators of $I^n$.
The Buchberger graph $\textnormal{Buch}(I^n)$ has vertices $1,\ldots,s$ and an edge $(i,j)$ whenever there is no monomial $m_k$ such that $m_k$ divides $\textnormal{lcm}(m_i,m_j)$ and the degree of $m_k$ is different from  $\textnormal{lcm}(m_i,m_j)$ in every variable that occurs in $\textnormal{lcm}(m_i,m_j)$.
Then it is known that the syzygy module $\textnormal{syz}(I^n)$ is generated by syzygies 
$$\sigma_{ij}=\cfrac{\textnormal{lcm}(m_i,m_j)}{m_i}\ \eb_i -\cfrac{\textnormal{lcm}(m_i,m_j)}{m_j}\ \eb_j$$
 corresponding to edges $(i,j)$ in  $\textnormal{Buch}(I^n)$ (\cite[Proposition 3.5]{CCA}).

Let $G(I^n) := \{ w(a, b, c) = x^{ta+b}y^{(t-2)b+(t-1)c}z^{b+c} \mid a,b,c \ge 0, a + b + c = n \}$ be the set of generators of $I^n$. 
We introduce the following lexicographic order $<$ on $G(I^n)$.  
Let $w(a, b, c), w(a', b', c') \in G(I^n)$. 
Then we define
\begin{itemize}
	\item $w(a', b', c') < w(a, b, c)$ if $a' < a$; 
	\item $w(a', b', c') < w(a, b, c)$ if $a' = a$ and $b' < b$.
\end{itemize}

\begin{Observation}
\label{obs}
For $w = x^ay^bz^c$, we denote $\deg_{x}w = a$, $\deg_{y}w = b$ and 
$\deg_{z}w = c$.  
It is easy to see that 
\begin{itemize}
      \item $\deg_{x}w(a', b', c') < \deg_{x}w(a, b, c)$ if and only if $w(a', b', c') < w(a, b, c);$ 
      \item $\deg_{y}w(a', b', c') \ge \deg_{y}w(a, b, c)$ if $w(a', b', c') < w(a, b, c);$ 
      \item $\deg_{z}w(a', b', c') \ge \deg_{z}w(a, b, c)$ if $w(a', b', c') < w(a, b, c)$ 
\end{itemize}
if $1 \le n \le t - 1$. 
\end{Observation}

To construct the minimal free resolution of $I^n$, we compute generators of ${\rm syz} (I^n)$.  
For $w(a, b, c), w(a', b', c') \in G(I^n)$, we define $w(a', b', c') \lessdot w(a, b, c)$ if $w(a', b', c') < w(a, b, c)$ 
and there is no monomial $w \in G(I^n)$ such that $w(a', b', c') < w < w(a, b, c)$. 
Moreover, we put
\begin{eqnarray*}
& & \sigma ((a, b, c), (a', b', c')) \\
&:=& \cfrac{\textnormal{lcm}(w(a, b, c), w(a', b', c'))}{w(a, b, c)}\ \eb_{(a, b, c)} -\cfrac{\textnormal{lcm}(w(a, b, c), w(a', b', c'))}{w(a', b', c')}\ \eb_{(a', b', c')}. 
\end{eqnarray*}

We show that \\

{\bf Claim 1. } If $w(a', b', c') \lessdot w(a, b, c)$, then $\{w(a', b', c'), w(a, b, c) \}$ is an edge of ${\rm Buch} (I^n)$. \\

\noindent {\em Proof of Claim 1.} \ 
Note that $w(a', b', c') \lessdot w(a, b, c)$ if and only if either $a' = a, b' = b - 1$ and $c' = c + 1$ or $(a, b, c) = (a, 0, n - a)$ and $(a', b', c') = (a - 1, n - a + 1, 0)$. 
In the former case, we have
${\rm lcm}(w(a, b, c), w(a, b - 1, c + 1)) = x^{ta + b} y^{(t - 2)(b - 1) + (t - 1)(c + 1)} z^{n - a}$  from Observation \ref{obs}. 
It is enough to show that there is no monomial $w \in G(I^n)$ such that 
$w \mid {\rm lcm}(w(a, b, c), w(a, b - 1, c + 1)) / xyz = x^{ta + b - 1} y^{(t - 2)(b - 1) + (t - 1)(c + 1) - 1} z^{n - a - 1}$. 

Assume that there exists such a monomial $w \in G(I^n)$. 
Then $\deg_x w \le ta + b - 1$.  
Hence $w \le w(a, b - 1, c + 1)$ from Observation \ref{obs}. 
However, $\deg_z w \ge b + c = n - a$ from Observation \ref{obs} again, 
this is a contradiction. 

Next, we consider the latter case, that is, $(a, b, c) = (a, 0, n - a)$ and $(a', b', c') = (a - 1, n - a + 1, 0)$. 
As in the former case, it is enough to show that 
there is no monomial $w \in G(I^n)$ such that
$ w \mid {\rm lcm} (w(a, 0, n - a), w(a - 1, n - a + 1, 0)) / xyz = x^{ta - 1} y^{(t - 2)(n - a + 1) - 1} z^{n - a}$. 
Assume that there exists such a monomial $w \in G(I^n)$. 
Then $\deg_x w \le ta - 1$ and $w \le w(a - 1, n - a + 1, 0)$ from Observation \ref{obs}.    
But we have $\deg_z w \ge n - a + 1$ from Observation \ref{obs} again, 
this is a contradiction. 

Therefore, we have the desired conclusion. \qedhere \\

Here, we put $\Sigma := \{ \sigma ((a, b, c), (a', b', c')) \mid w(a', b', c') \lessdot w(a, b, c) \}$. 
Next, we will show the following: \\

{\bf Claim 2. } Assume that  $w(a', b', c') < w(a, b, c)$ and $w(a', b', c')\  / \!\!\!\!\!\lessdot w(a, b, c)$. 
Then $\sigma ((a, b, c), (a', b', c'))$ can be expressed as an $S$-linear combination of the elements of $\Sigma$. \\

\noindent {\em Proof of Claim 2.} \ 
Let $s \ge 3$ and assume that 
\[
w(a', b', c') = w(a_s, b_s, c_s) \lessdot w(a_{s-1}, b_{s-1}, c_{s-1}) \lessdot \cdots  \lessdot w(a_{1}, b_{1}, c_{1}) = w(a, b, c). 
\]
From Observation \ref{obs}, we can see that 
\[
\frac{{\rm lcm} (w(a_1, b_1, c_1), w(a_s, b_s, c_s))}{ {\rm lcm} (w(a_i, b_i, c_i), w(a_{i + 1}, b_{i + 1}, c_{i + 1})) }
\]
is a monomial in $S$ for all $1 \le i \le s - 1$. 
Hence we have
\begin{eqnarray*}
& & \sigma ((a, b, c), (a', b', c')) = \sigma ((a_1, b_1, c_1), (a_s, b_s, c_s)) \\
&=& \sum_{i = 1}^{s - 1} \frac{{\rm lcm} (w(a_1, b_1, c_1), w(a_s, b_s, c_s))}{ {\rm lcm} (w(a_i, b_i, c_i), w(a_{i + 1}, b_{i + 1}, c_{i + 1})) } \sigma ((a_i, b_i, c_i), (a_{i + 1}, b_{i + 1}, c_{i + 1})). 
\end{eqnarray*} 
Thus we have the desired conclusion. \qed \\

By Claim 1, 2 and \cite[Proposition 3.5]{CCA}, $\Sigma$ is the set of generators 
of ${\rm syz}(I^n)$. 
Moreover, it is clear that the elements of $\Sigma$ are linearly independent on $S$. 
Hence 
\[
0 \to \bigoplus_{j} S(-j)^{\beta_{1, j}} \to S(-nt)^{\beta_{0, nt}} \to I^n \to 0
\]
is the minimal free resolution of $I^n$. 
Therefore we have $\pd I^n = 1$. \qed \\
\end{proof}

Now, we can prove Theorem \ref{main}. \\
\begin{proof}[Proof of Theorem \ref{main}]
First, for any integers $i,k \geq 1$, we define the monomial ideal
$I_{k, i} := (x_i^{k+1},x_iy_i^{k-1}z_i,y_i^kz_i)$
in $B_i=K[x_i,y_i,z_i]$.
Then by Proposition \ref{01}, we obtain
\begin{displaymath}
\depth B_i/I_{k,i}^t =\left\{
\begin{aligned}
&1,& \ \textnormal{if}& \ t \leq k,\\
&0& \ \textnormal{if}& \ t > k.
\end{aligned}
\right.
\end{displaymath}
Set $n = a-b$ and $s_i := |f^{-1}(a-i+1)|$ for each $1 \leq i \leq n$. 
We show that 
$I = \sum_{i =1}^{n} I_{s_i,i}$
in $S = K[x_{1},y_{1},z_{1},\dots ,x_{n},y_{n},z_{n},w_1, \dots ,w_b]$ 
is the required monomial ideal. 
By Lemma \ref{noninc} and \ref{ideal}, we immediately show the assertion follows. 
\end{proof} 

\begin{Example}
Nonincreasing functions $f : \ZZ_{\geq 0} \setminus \{ 0 \} \to \ZZ_{\geq 0}$ with $f(1) = f(2) = 2$ 
and $f(3) = f(4) = \cdots = 0$ and $g : \ZZ_{\geq 0} \setminus \{ 0 \} \to \ZZ_{\geq 0}$ with 
$g(1) = g(2) = 2$, $g(3) = 1$ and $g(4) = g(5) = \cdots = 0$  do not satisfy the assumption of 
Theorem \ref{main}. 
However there exist monomial ideals $I$, $J$ of $S = K[x_1, \ldots, x_6]$ such that $\depth S/I^k = f(k)$ and $\depth S/J^k = g(k)$ for $k \ge 1$. 

Indeed, $I = (x_1^3, x_1x_2x_3, x_2^2x_3)(x_4^3, x_4x_5x_6, x_5^2x_6) + (x_1^4, x_1^3x_2, x_1x_2^3, x_2^4, x_1^2x_2^2x_3)$ and $J = (x_1^4, x_1x_2^2x_3, x_2^3x_3)(x_4^4, x_4x_5^2x_6, x_5^3x_6) + (x_1^5, x_1^4x_2, x_1x_2^4, x_2^5, x_1^3x_2^2x_3)$ 
are the desired monomial ideals. 
\end{Example}

\section{the number of variables and depth stability number}
 	Let $I \neq (0)$ be a monomial ideal in $S = K[x_1, \ldots, x_n]$ and $f(k)$ the depth function of $I$.
 	We set $\lim_{k\rightarrow \infty}f(k)=d$ and $r=\textnormal{dstab}(I)$.
 	When $n=1$, we know that $d=0$ and $r=1$.
 	Moreover, when $n=2$, we have $0 \leq d \leq 1$ and $r=1$.
 	
 	In this section, for $n \geq 3$, we discuss bounds of the limit depth and depth stability number of a monomial ideal.
 	In fact, we show the following theorem.
\begin{Theorem}\label{ndr}
	Assume $n \geq 3$.
	Let $I \neq (0)$ be a monomial ideal in $S = K[x_1, \ldots, x_n]$ and $f(k)$ the depth function of $I$.
	We set $\lim_{k\rightarrow \infty}f(k)=d$ and $r=\textnormal{dstab}(I)$.
	Then one of the followings is satisfied:
\begin{itemize}
	\item $0 \leq d \leq n-2$ and $r \geq 1$.
	\item $d=n-1$ and $r=1$.
\end{itemize}	
Conversely, for any $d$ and $r$ satisfied one of the above, there exists a monomial ideal $J$ in $S$ such that  $\lim_{k\rightarrow \infty}g(k)=d$ and $r=\textnormal{dstab}(J)$, where $g(k)$ is the depth function of $J$.
\end{Theorem}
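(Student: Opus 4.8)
The plan is to treat the two directions separately. For the direct statement, I would first observe that the bounds $0\le d\le n-1$ and $r\ge 1$ are automatic: since $\sqrt{I^k}=\sqrt I$ we have $\dim S/I^k=\dim S/I\le n-1$ for a nonzero proper monomial ideal, so $\depth S/I^k\le \dim S/I^k\le n-1$, and $\textnormal{dstab}(I)\ge 1$ by definition. Thus the only real content is the implication $d=n-1\Rightarrow r=1$. I would prove it as follows. If $d=n-1$, then $\depth S/I^k=n-1$ for all $k\gg 0$; using $\depth I^k=\depth S/I^k+1$ (as in the proof of Lemma~\ref{noninc}) this gives $\depth I^k=n$, whence $\pd I^k=0$ by Auslander--Buchsbaum and $I^k$ is a free $S$-module. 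A nonzero ideal that is free has rank one, so $I^k$ is principal. A short monomial computation then upgrades this to ``$I$ itself is principal'': writing $I=(u_1,\dots,u_s)$ minimally and $I^k=(m)$, the containments $u_j^k\in(m)$ force $\deg_x m\le k\min_j\deg_x u_j$ for every variable $x$, while $m=\prod_j u_j^{a_j}$ with $\sum_j a_j=k$ forces $\deg_x m\ge k\min_j\deg_x u_j$; comparing them shows that all $u_j$ with $a_j>0$ coincide, so $m=u_1^{k}$ and then $u_1\mid u_j$ for every $j$, i.e. $s=1$. Finally, if $I$ is principal then $S/I^k$ is a hypersurface for all $k$, so $\depth S/I^k=n-1$ for every $k\ge 1$ and $r=1$.

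For the converse I would realize each admissible pair $(d,r)$ by an explicit monomial ideal in exactly $n$ variables. When $r=1$ I take $J=(x_1,\dots,x_{n-d})$: then $S/J^k\cong\bigl(K[x_1,\dots,x_{n-d}]/(x_1,\dots,x_{n-d})^k\bigr)\otimes_K K[x_{n-d+1},\dots,x_n]$ has depth $d$ for every $k$, so the depth function is the constant $d$ and $\textnormal{dstab}(J)=1$; this also disposes of the line $d=n-1,\ r=1$. For $r\ge 2$ (so necessarily $d\le n-2$) and $0\le d\le n-3$, I would build $J$ from the blocks of Proposition~\ref{01}: put $J=I_{r-1,1}+\nn$, where $I_{r-1,1}=(x_1^{r},x_1y_1^{r-2}z_1,y_1^{r-1}z_1)$ lives in $K[x_1,y_1,z_1]$ and $\nn=(u_1,\dots,u_c)$ is the maximal ideal of a polynomial ring in $c=n-3-d\ge 0$ fresh filler variables, and then adjoin $d$ further variables not occurring in $J$. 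By Proposition~\ref{01} the core $I_{r-1,1}$ has the nonincreasing depth function taking value $1$ for $k\le r-1$ and $0$ for $k\ge r$, while $\nn$ has the constant depth function $0$; Lemma~\ref{ideal} together with Lemma~\ref{noninc} shows that their sum again has this depth function with the drop at $k=r$, the filler variables being absorbed by the $\min$-convolution. Adjoining the $d$ free variables adds $d$, so $\depth S/J^k$ equals $d+1$ for $k\le r-1$ and $d$ for $k\ge r$; hence the limit depth is $d$, $\textnormal{dstab}(J)=r$, and the total variable count is $3+c+d=n$, as required.

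The remaining case $d=n-2$ with $r\ge 2$ is where I expect the main obstacle to lie. Here a nonincreasing realization is impossible: a nonincreasing depth function with limit $n-2$ and $\textnormal{dstab}=r\ge 2$ would require $\depth S/I^{r-1}\ge n-1$, forcing $I^{r-1}$ and hence $I$ to be principal by the argument above, and then the depth function would be the constant $n-1$. One is therefore forced to produce a depth function that \emph{increases} to its limit. Concretely, it suffices to construct, for each $r\ge 2$, a monomial ideal $I_0\subset K[x,y,z]$ of height $2$ whose depth function takes value $0$ for $k\le r-1$ and value $1$ for $k\ge r$; adjoining $n-3$ free variables then gives limit depth $(n-3)+1=n-2$ and $\textnormal{dstab}=r$ in exactly $n$ variables. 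The heart of the matter is to engineer $I_0$ so that $\mm=(x,y,z)$ lies in $\Ass(S/I_0^k)$ precisely for $k\le r-1$ and leaves $\Ass(S/I_0^k)$ for $k\ge r$, the depth stabilizing at $\dim S/I_0=1$.

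Producing such a family explicitly --- equivalently, realizing the unit upward jump in only three variables, rather than in the larger number of variables that a direct appeal to the nondecreasing realization \cite[Theorem~4.1]{HH} would consume --- is the step I expect to require the most care, and it is where the explicit monomial combinatorics of the paper must be brought to bear. Everything else reduces either to the automatic bounds, to the principality argument of the first paragraph, or to the block-and-filler bookkeeping of the second.
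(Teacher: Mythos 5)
Your treatment of the direct implication and of the cases $r=1$ and $0\le d\le n-3$, $r\ge 2$ is correct and essentially the paper's argument: the paper also reduces $d=n-1$ to principality of $I$ (via Cohen--Macaulayness of $S/I^r$ and the UFD property rather than your freeness/Auslander--Buchsbaum route, but the two are interchangeable here), uses $(x_1,\dots,x_{n-d})$ for constant depth functions, and uses the block $(x_1^r,x_1x_2^{r-2}x_3,x_2^{r-1}x_3)$ of Proposition~\ref{01} together with filler and free variables for the nonincreasing realizations. Your observation that a nonincreasing realization is impossible when $d=n-2$ and $r\ge 2$ is also correct and is a good way to see why that case is structurally different.

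However, you have a genuine gap exactly at the case $d=n-2$, $r\ge 2$: you correctly identify that what is needed is a height-two monomial ideal $I_0\subset K[x,y,z]$ with $\depth K[x,y,z]/I_0^k=0$ for $k\le r-1$ and $=1$ for $k\ge r$, but you stop short of producing one, explicitly deferring it as the step requiring the most care. Without that construction the converse direction is not proved. The paper closes this case by taking
$J_3=(x_1^{r+2},\,x_1^{r+1}x_2,\,x_1x_2^{r+1},\,x_2^{r+2},\,x_1^rx_2^2x_3)\subset K[x_1,x_2,x_3]$,
which is the basic building block from the proof of \cite[Theorem 4.1]{HH} and has precisely the required nondecreasing depth function with jump at $k=r$; adjoining the $n-3$ unused variables then gives limit depth $n-2$ and ${\rm dstab}=r$ in exactly $n$ variables. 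Note also that your worry that invoking \cite{HH} would ``consume'' too many variables is unfounded for this particular step function: the general construction of \cite[Theorem 4.1]{HH} may need many variables, but the single-jump case lives in three, which is all that is needed here.
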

\begin{proof}
	In general, for any monomial ideal $I \neq (0)$ in $S$, we have
	$0 \leq \depth S/I \leq n-1$.
	We assume that $d=n-1$.
	Since $\dim S/I^r \leq n-1$, $S/I^r$ is Cohen-Macaulay.
	Hence for any minimal prime ideal $P$ of $I^r$, we have $\height P=1$.
	In particular, $P$ is a principle ideal since $S$ is UFD.
	Hence $I^r$ is a principle ideal.
	This says that $I$ is also a principle ideal.
	Thus, for any $k \geq 1$, $S/I^k$ is a hypersurface.
	Therefore, we have $r=1$.
	
	Next, we show the latter part.
	Assume that $0 \leq d \leq n-3$ and $r \geq 2$.
	Let $J_1 = (x_1^r, x_1x_2^{r-2}x_3, x_2^{r-1}x_3) \subset A := K[x_1, x_2, x_3]$. 
	By Proposition \ref{01}, we have
	\begin{displaymath}
	\depth A/J_1^k =\left\{
	\begin{aligned}
	&0,& \ \textnormal{if}& \ k \geq r,\\
	&1,& \ \textnormal{if}& \ k \leq r-1.
	\end{aligned}
	\right.
	\end{displaymath}
Let $J=J_1 + (x_4, \ldots, x_{n - d}) = (x_1^r, x_1x_2^{r-2}x_3, x_2^{r-1}x_3, x_4, \ldots, x_{n-d})$ be a monomial ideal in $S$
and $g_1(k)$ the depth function of $J$.
Then we have $\lim_{k\rightarrow \infty}g_1(k)=d$ and $\textnormal{dstab}(J)=r$.
Moreover, an ideal $J_2 = (x_1, \ldots x_{n - d}) \subset S$ satisfies that 
$\depth (S/J_2^k) = d$ for all $k \ge 1$, that is, $\lim_{k\rightarrow \infty} \depth (S/J_2^k) =d$ and $ \textnormal{dstab}(J_2)=1$. 

Next, we assume that $d=n-2$ and $r \geq 1$.
By \cite[Proof of Theorem 4.1]{HH}, we can see that 
a monomial ideal $J_3 = (x_1^{r + 2}, x_1^{r + 1}x_2, x_1x_2^{r +1}, x_2^{r + 2}, x_1^rx_2^2x_3) \subset A$ satisfies that $\textnormal{dstab}(J_3)=r$ and
\begin{displaymath}
\depth A/J_3^k =\left\{
\begin{aligned}
&1,& \ \textnormal{if}& \ k \geq r,\\
&0,& \ \textnormal{if}& \ k \leq r-1.
\end{aligned}
\right.
\end{displaymath}
Let $J'=J_3$ be the monomial ideal in $S$ and $g_2(k)$ the depth function of $J'$. 
Then we have $\lim_{k\rightarrow \infty}g_2(k)=d$ and $\textnormal{dstab}(J')=r$.

When $d=n-1$ and $r=1$,
we immedietly obtain a monomial ideal satisfied the condition by the former part of this proof, as desired. 
\end{proof}

\end{document}